\documentclass[smallextended,referee,envcountsect,]{svjour3}
\smartqed

\begin{document}

\title{Solution and Optimal Strategies for a Differential Game of Pursuit-Evasion with Point-Wise Constraints.}
\titlerunning{Solution and Optimal Strategies for a Differential Game}

\author{Jamilu Adamu$^{1,2}$. Abbas Ja'afaru Badakaya$^{2}$. Felix Wallace 
Tarry$^{3}$. Mehdi Salimi$^{3}$}
\authorrunning{ Jamilu, A. et al. }
\institute{Jamilu Adamu  \at
              jamiluadamu88@gmail.com 
           \and
            Abbas Ja'afaru Badakaya \at
              ajbadakaya.mth@buk.edu.ng 
               \and
             Felix Wallace Tarry   \at
              felixw-t2004@hotmail.ca
             \and
             Mehdi Salimi, Corresponding author \at
              mehdi.salimi@kpu.ca
              \and    
 \begin{itemize}
 \item[1] Department of Mathematics, Federal University Gashua, P.M.B. 1005, Gashua, Yobe State, Nigeria.
 \item[2] Department of Mathematical Sciences, Bayero University Kano, P.M.B. 3011, Kano State, Nigeria.
 \item[3] Mathematics Department, Kwantlen Polytechnics University, BC, Canada.
 \end{itemize}}
 \date{Received: date / Accepted: date}

\maketitle

\begin{abstract}
This work study a differential game of pursuit-evasion involving countably many pursuers $p_{1},p_{2},\cdots,p_{m}$ and a single evader $e$ under point-wise (geometric) constraints in the sequence space $l_{2}.$ The pursuers evolve according to specified differential equations of $1^{st}$ order  while the evader follows a $2^{nd}$ order differential equation. The game period is a fixed time interval of length $\theta$ unit of time. The minimum distance between evader and the pursuer at the terminal time denotes the game payoff. The pursuers aim is to minimize the payoff, whereas the evader seeks to maximize it. We obtain the solution of the game, including optimal strategies of the players construction, as well as game value estimation.
\end{abstract}
\keywords{Pursuer \and Evader \and Pursuit-Evasion \and Geometric constraints}

\newpage
\section{Introduction}
\noindent Differential games of pursuit-evasion constitute a special class of problems in dynamical systems that examine strategic interactions between two or more agents with conflicting objectives, each striving to achieve its own goal. The concept was introduced by Isaacs \cite{Isaacs1965}, and numerous fundamental results have since been developed and documented in classical works, including Petrosyan’s monographs, which occupy a central place in the development of pursuit-evasion differential games. In his seminal work Differential Games of Pursuit, later expanded and published in \cite{Petrosyan 1993}, he provided one of the earliest systematic and rigorous treatments of pursuit differential games as an independent class within differential game theory. These contributions established a unified mathematical framework for games governed by systems of ordinary differential equations, formalizing key concepts such as capture conditions, admissible controls, strategies, and payoff functionals. Other notable contributions include those of Samatov \cite{B. Samatov 2023,B. Samatov 2020}, Petrov \cite{Petrov 2022,Petrov 2025}, Lewin \cite{Lewin1994}, Ibragimov et al. \cite{Ibragimov 2010,Ibragimov 2009,Ibragimov 2005,Ibragimov 2015}, Badakaya et al. \cite{Badakaya 2021,Badakaya 2022,Badakaya 2024} and Krasovskii and Subbotin\cite{Krasovskii1974,Krasovskii1988,REF15}, among others. Differential game provides valuable insights into the behavior of agents in conflict situations arising in real-world applications, including reach-avoid games, surveillance and security, economic processes, ecological equilibrium, labor employer relations, disease control in medicine,  and counter insurgency problems.\\

\noindent Pursuit-evasion differential games, which involve estimating game value along with constructing best strategies for the players, are the main focus of this paper. Papers \cite{Ahmad 2019,Adamu 2020,Badakaya 2021,Badakaya 2022,Ibragimov 2010,Ibragimov 2009,Ibragimov 2005,Rilwan 2023} and the references therein provide examples of works addressing these types of problems. In these studies, the dynamics of the two parties are denoted by systems of ordinary differential equations of several order.\\

\noindent For instance, in Badakaya et al. \cite{Badakaya 2021,Badakaya 2022}, the agents’ (pursuer and evader) dynamics  obey $nth$ order differential equations, thereby generalizing the work of Ibragimov and Salimi \cite{Ibragimov 2005,Salimi 2019}, which considers problems where the agents’ motion are considered by $1^{st}$ order differential equations. In contrast, the studies in \cite{Ibragimov 2010,Ibragimov 2009} examine cases in which the players’ move according to $2^{nd}$ order differential equations. In other works (see, for example, \cite{Adamu 2020}), the dynamics of each pursuer are governed by $1^{st}$ order differential equations, while that of the evader obey $2^{nd}$ differential equation.\\

\noindent Players employ admissible control strategies to drive the system from its initial state toward desired terminal outcomes (capture or escape) while satisfying prescribed control constraints. In pursuit-evasion differential games, these constraints are commonly modeled as geometric constraints, which impose pointwise bounds on the instantaneous control magnitude or direction (e.g., $\vert a(t) \vert \leq \rho$ for all $t$) \cite{Ibragimov 2010,Ibragimov 2005}, and integral constraints, which reflect limitations on the total control effort or energy resources over the game horizon (e.g., $\int_{0}^{T}\vert b(t)\vert^{2}dt \leq \sigma^{2}$). Such constraints determine the feasible maneuvering capabilities of each player and strongly influence optimal pursuit and evasion strategies \cite{Ibragimov 2009,Kazimirova 2024}.\\

\noindent The papers by Salimi \cite{Salimi 2019} investigate a differential game involving multiple pursuers and a single evader in the sequence space $ l_{2} $. The dynamics of all players are governed by first-order differential equations, while their control functions are subject to integral constraints. The authors established sufficient conditions for the existence of the value of the game and constructed optimal strategies for the players. A related problem, in which the players’ controls are subject to geometric constraints, was studied in \cite{Ibragimov 2005}.\\

\noindent In contrast, the work in \cite{Ibragimov 2009} investigates a fixed-duration pursuit-evasion differential game involving infinitely many pursuers and a single evader in the space $l_{2}$. The dynamics of the players are described by second-order differential equations, and the control functions of all players are subject to integral constraints. The authors established sufficient conditions for the existence of the value of the game and constructed optimal strategies for the players. A related problem, in which the integral constraints are replaced by geometric constraints, was studied in \cite{Ibragimov 2010}, where analogous results were obtained.\\

\noindent Adamu et al. \cite{Adamu 2020} check a fixed-horizon pursuit-evasion differential game in the sequence space $l_{2}$ under integral constraints. Each pursuer moves according to certain $1^{st}$ order differential equations, whereas the trajectory path followed by evader is governed by a $2^{nd}$ order differential equation.  The payoff functional is defined as the minimum distance between the evader and the pursuer at the terminal time. The authors determined the game value and formulate the best strategies for the players.\\

\noindent Motivated by these developments, this paper investigates a pursuit-evasion differential game with geometric constraints on the players' control functions. The objectives are to derive sufficient conditions for capture, construct optimal admissible strategies that ensure success under resource limitations, and determine the value of the game. By extending and complementing existing results, this study contributes to the theoretical understanding of pursuit-evasion dynamics under geometric constraints and provides insights relevant to practical applications involving motion-related quantities such as speed and acceleration.

\section{Statement of the Problem}
 We consider a differential game problem of pursuit-evasion in which
pursuers $(p_{j}), \ j \in J = \{1,2, \dots, n,  \dots\}$ and evader $(e)$
move according to the following initial value problem:
\begin{equation}\label{D1}
\begin{array}{ll}
\dot{p}_{j} = a_{j}(t), & p_{j}(0) = p_{j0}, j \in J, \\
\ddot{e} = \  b(t), & \dot{e}(0) = e^{1}, \ e(0) =e^{0}, \\
\end{array}
\end{equation}
where $p_{j}, p_{j0}, a_{j},e, e^{0},e^{1}, b \in l_{2},  \
a_{j}=\left(a_{1},a_{2},\cdots \right)  $ stand for the control strategies (functions) of the pursuers $p_{j}$ and $b(t)$ is that of the evader $e$. The space $l_2 $ consists of elements of the form  $g = (g_{1}, g_{2}, \dots)$ such that
 $ \sum_{m=1}^{\infty}g_{m}^{2}< \infty ,$ with scalar product and norm defined by
 $$\langle \rho, \sigma\rangle = \sum\limits_{m =1}^{\infty} \rho_{m}\sigma_{m}, \
  ||g|| =\left(\sum\limits_{m = 1}^{\infty} g_{m}^2 \right)^{1/2},$$
where $\rho, \sigma, g  \in l_2,$ respectively. Let $H(\upsilon, r) := \{p\in l_{2}:||p - \upsilon||\leq r\}$ be a closed ball centered at $\upsilon \in \ell_2$ of radius $r.$ We also define a sphere  having center at $\upsilon$ with radius $r$ by  $S(\upsilon,r) = \{p\in l_{2}:||p - \upsilon|| = r\} .$ 

\begin{definition}
A function $a_j(t) = (a_{1}(t), a_{2}(t),  \dots)$ with
measurable  coordinates such that   
\begin{equation}\label{D2}
||a_{j}(t)|| \leq \alpha_{j}, \ \ \  0 \leq t \leq \theta,
\end{equation}
is called admissible control of the pursuer $p_j,$ where $\alpha_{j}> 0$ defines the maximal speed of the pursuers.
\end{definition}

\begin{definition}
A function $b(t) = (b_{1}(t), b_{2}(t), \dots)$ with measurable coordinates such that
\begin{equation}\label{D3}
||b(t)||
\leq \beta, \ \ \ 0 \leq t \leq  \theta,
\end{equation} is called admissible control of the evader $e,$ where $\beta>0$ defines the maximal acceleration of the evader.
\end{definition}

\begin{definition}
A function $A_{j}(t, a_{0}, e_{0} , b(t))$, $A_{j}: [0, \theta] \times l_{2} \times l_{2} \times l_{2} \rightarrow l_{2},$ is called the pursuers' strategies if the following conditions hold true.
\begin{itemize}
\item[(i)] $A_{j}(t, a_{0}, e_{0} , b(t))$ is measurable.
\item[(ii)] $||A_{j}(t, a_{0}, e_{0} , b(t))|| \leq \alpha_{j}, \ \ \  0 \leq t \leq \theta$
\item[(iii)] The system (\ref{D1}) with $a_{j}(t)=A_{j}(t, a_{0}, e_{0} , b(t))$ has a unique solution $(p_{j}(\cdot), e(\cdot))$ for any admissible control $b = b(t).$ 
\end{itemize}
\end{definition}

\begin{definition}
A pursuers strategies $\bar{A}_{j} , j \in J $ are called optimal if the following condition holds
 $$
 \Psi_{1}(\bar{A}_{1},\bar{A}_{2},\dots,\bar{A}_{n},\dots) = \inf_{A_{1}, A_{2} ,\dots, A_{n}, \dots}\Psi_{1}(A_{1}, A_2, \dots, A_{n}, \dots)
 $$
 where
 $$
 \Psi_{1}(A_{1},A_{2},\dots,A_{n},\dots, )=\sup_{A(\cdot)}\inf_{j\in J}||p_{j}(\theta) - e(\theta)||,
 $$
$A_{j}, j \in J$ are pursuers admissible strategies and $b(\cdot)$ is an admissible control of the evader $e$.
\end{definition}

\begin{definition}
A function $B(t, p_{1},p_{2},\dots, e)$,
$B:[0, \theta] \times l_{2}\times l_{2}\times,\dots, \times l_{2} \rightarrow l_{2},$ is called evader's strategy if the following conditions hold true.
\begin{itemize}
\item[(i)] $B(t, p_{1},p_{2},\dots, e)$ is measurable.
\item[(ii)] $||B(t, p_{1},p_{2},\dots, e)|| \leq \beta, \ \ \  0 \leq t \leq \theta$
\item[(iii)] The system (\ref{D1}) with $b(t)=B(t, p_{1},p_{2},\dots, e)$ has a unique solution $(p_{j}(\cdot), e(\cdot))$ for any admissible control $a(t).$ 
\end{itemize}
\end{definition}

\begin{definition}
The evader's strategy $\bar{B}$ is called optimal if it satisfies the following 
\begin{equation}
\Psi_{2}(\bar{B}) = \sup_{B}\Psi_{2}(B),
\end{equation}
where
\begin{equation}
\Psi_{2}(B)=\inf_{a_{1}(\cdot),\dots,a_{n}(\cdot),\dots}\inf_{j\in J}||p_{j}(\theta)-e(\theta)||,
\end{equation}
$ a_{j}(\cdot)$ are pursuers admissible control and $B$ is any admissible strategy of the evader.
\end{definition}
\noindent One should be wary that the optimal strategy condition is dependent only on the final state of the players, that being their positions at time $\theta$. Therefore there should be no confusion in that a pursuer catching the evader earlier than time $\theta$ does not end the game, and does not change optimality. It is reported in \cite{REF15} that if
$\Psi_{1}(\bar{A}_{1},\bar{A}_{2},\dots,\bar{A}_{n},\dots)= \varphi = \Psi_{2}(\bar{B})$ then the game has a value $\varphi.$\\

\noindent It can be shown that the state equation of the evader in (\ref{D1})
can be mapped to the following first-order differential equation
\begin{equation}\label{REE}
\dot{e} = (\theta - t)b(t),\  e(0)= e_{0} = e^{1}\theta + e^{0} .
\end{equation}
Solving (\ref{REE}) results in
\begin{equation}\label{D69}
e(\theta) = e^0 + \theta e^1 + \int_0^\theta (\theta - t)b(t)dt
\end{equation}
\noindent Now consider the state equation of the evader in (\ref{D1})
$$
\ddot{e} = e(t), \ \dot{e}(0) = e^{1}, \ e(0) =e^{0}
$$
Solving for $e(\theta)$, we have
$$
e(\theta) = e^0 + \theta e^1 + \int_0^\theta \int_0^t b(s) dsdt
$$
Which, by Cauchy's formula for repeated integration, is
\begin{equation}\label{D68}
e(\theta) = e^0 + \theta e^1 + \int_0^\theta (\theta - t)b(t)dt
\end{equation}
The position of the evader in (\ref{D69}) and (\ref{D68}) is the same at time $\theta.$ Therefore any proof on the optimal strategy of one state equation is valid for the other.\\
\noindent \textit{\textbf{Game Description:}}
Here and below, we will call the game in which the players' motion given by equation (\ref{D1}) where the control functions $a_{j}(\cdot)$ and $b(\cdot)$ satisfies the inequalities (\ref{D2}) and (\ref{D3}) respectively, as game $G^{\ast}$.

\noindent \textit{\textbf{Research problem:}} For the game $G^{\ast},$ find the value and construct optimal strategies of the players. \\

\noindent If the evader $e$ and the pursuers $p_{j}, j \in J$ apply their admissible controls $b(t) = (b_{1}(t), b_{2}(t),\dots)$ and $a_{j}(t) =(a_{j1}(t), a_{j2}(t), \dots)$
 respectively, then, according to (\ref{D1}), their corresponding dynamics are given by
\begin{equation}\label{D70}
p_{j}(t)= (p_{j1}(t), p_{j2}(t), \dots), \ \ \
e(t)=(e_{1}(t), e_{2}(t),  \dots).
\end{equation}
where
\begin{equation}\label{PU1}
 p_{j}(t) = p_{j0} + \int_{0}^{t}a_{j}(s)ds,
\end{equation}
and
\begin{equation}\label{EE2}
e(t)  =  e_{0} + \int_{0}^{t}(t - s)b(s)ds.
\end{equation}
\noindent The attainability domain of the pursuer $p_j$
from initial position $p_{j0}$ and until the time $\theta$ is the closed ball
$H_{P_j}(p_{j0}, \alpha_j\theta).$ Indeed, using (\ref{D70}) and (\ref{D2}), we obtain
\begin{eqnarray}
\left|\left|p_{j}(\theta)- p_{j0}\right|\right|
& = & \left|\left|p_{j0}+\int_{0}^{\theta}a_{j}(t)dt-p_{j0}\right|\right| \nonumber\\
& \leq & \int_{0}^{\theta} \Vert a_{j}(t) \Vert dt \leq  \alpha_{j}\theta. \nonumber
\end{eqnarray}

\noindent On the other hand, if $\bar{p} \in H_{P_j}(p_{j0}, \alpha_j\theta)$ and
$ a_{j}(t) :=\frac{\bar{p}-p_{j0}}{\theta},$ (where $a_{j}(t)$ can be shown to be admissible), then we have
\begin{eqnarray}
p_{j}(\theta)& = & p_{j0}+\int_{0}^{\theta}\frac{\bar{p}-p_{j0}}{\theta}dt =\bar{p}. \nonumber
\end{eqnarray}
In a similar way with $b(t) := \frac{2(\bar{e} - e_{0})}{\theta^{2}},$ we can show that the ball $H_E(e_0, \beta\frac{\theta^2}{2})$
is the attainability domain of the evader $e.$

\section{Auxiliary Game }
\noindent This section is concerned with the study of game $G^{\ast}$ but with one pursuer $p$ and one evader $e.$ In this version of the game the players move according to the following equations:
 \begin{equation}\label{AGP}
\begin{array}{ll}
\dot{p}(t) = a(t), &  p(0) = p_{0}\\
\dot{e}(t) = (\theta - t)b(t), & e(0)= e_{0} = e^{1}\theta + e^{0} .
\end{array}
\end{equation}
The goal of the pursuer is to attain the equality $p(\theta) = e(\theta)$ and that of the evader is contrary. The question here is, under what conditions can the pursuer achieve its target? To answer this question we define the following set in the space $l_{2}$ \\
Let 
\begin{equation}\label{P2}
\Delta = \left\{\gamma \in l_2: 2\langle e_{0} - p_{0}, \gamma\rangle
\leq \theta^2\left( \alpha^2 - \frac{\beta^2\theta^2}{4}\right)  + ||e_{0}||^2 - ||p_{0}||^2   \right\}.
\end{equation}

\noindent \textit{\textbf{Construction of the pursuer's strategy}}\\
 Let $\theta >0$ be a fixed constant, and assume that at the initial time $t=0,$ the pursuer is aware of the initial parameters $p_{0}, e_{0}$ and $b(t).$\\
Let the pursuers' strategy be defined as follows;
\begin{equation}\label{J5}
a(t) = \frac{e_{0} - p_{0}}{\theta} + 
 \int_{0}^{\theta} \frac{(\theta-t)b(t)}{\theta}dt, \ \ 0 \leq t \leq \theta
\end{equation}
 
\noindent The following statement provide sufficient condition for the pursuer to achieve its aim:

\begin{lemma}\label{PP2}
Let $e(\theta)\in \Delta,$ then the strategy (\ref{J5}) guarantees both the inequality $\Vert a(t) \Vert \leq \alpha$ for all $t \in \left[0, \theta \right] $ and the achievement of the equality $p(\theta)=e(\theta)$ on the time interval $\left[0, \theta \right] $, in the game $G^{\ast}$.
\end{lemma}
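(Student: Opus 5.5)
The plan is to notice that the strategy (\ref{J5}) is constant in $t$ and reduces to a simple closed form. Since $\int_{0}^{\theta}(\theta-t)b(t)\,dt$ is a fixed element of $l_2$ (the pursuer is assumed to know $b(\cdot)$ at $t=0$), comparison with (\ref{D69}) gives
\[
a(t)=\frac{e_0-p_0}{\theta}+\frac{1}{\theta}\int_{0}^{\theta}(\theta-s)b(s)\,ds=\frac{e(\theta)-p_0}{\theta},\qquad 0\le t\le\theta .
\]
The proof therefore splits into two claims.

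\textbf{Equality $p(\theta)=e(\theta)$.} This is immediate from (\ref{PU1}):
\[
p(\theta)=p_0+\int_{0}^{\theta}a(t)\,dt=p_0+\bigl(e(\theta)-p_0\bigr)=e(\theta).
\]
Measurability is trivial because $a$ is constant. Existence and uniqueness of the trajectory hold because the right-hand side of (\ref{AGP}) does not depend on the state.

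\textbf{Admissibility $\Vert a(t)\Vert\le\alpha$.} This is equivalent to $\Vert e(\theta)-p_0\Vert\le\alpha\theta$, and the main step is to rewrite the half-space $\Delta$ in (\ref{P2}) geometrically. Expanding the squared norms gives the identity
\[
\Vert\gamma-p_0\Vert^2-\Vert\gamma-e_0\Vert^2=2\langle e_0-p_0,\gamma\rangle+\Vert p_0\Vert^2-\Vert e_0\Vert^2 .
\]
Hence $\gamma\in\Delta$ is equivalent to
\[
\Vert\gamma-p_0\Vert^2\le\Vert\gamma-e_0\Vert^2+\alpha^2\theta^2-\frac{\beta^2\theta^4}{4}.
\]
We apply this with $\gamma=e(\theta)$, which lies in $\Delta$ by hypothesis. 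By the attainability computation (using (\ref{D3}) and $\int_0^\theta(\theta-t)\,dt=\theta^2/2$) we have
\[
\Vert e(\theta)-e_0\Vert\le\int_0^\theta(\theta-t)\Vert b(t)\Vert\,dt\le\frac{\beta\theta^2}{2}.
\]
Substituting this bound gives $\Vert e(\theta)-p_0\Vert^2\le\alpha^2\theta^2$, and therefore $\Vert a(t)\Vert\le\alpha$ for all $t$.

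The only step requiring care is the algebraic rewriting of $\Delta$: getting the signs right so that the term $-\beta^2\theta^4/4$ exactly cancels the worst case of $\Vert e(\theta)-e_0\Vert^2$. Everything else is direct substitution.
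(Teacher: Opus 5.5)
Your proof is correct and follows essentially the same route as the paper: your rewriting of $\Delta$ as $\Vert\gamma-p_0\Vert^2-\Vert\gamma-e_0\Vert^2\le\alpha^2\theta^2-\beta^2\theta^4/4$ is, at $\gamma=e(\theta)$, exactly the paper's inequality (\ref{J66}). From there both arguments bound $\Vert e(\theta)-e_0\Vert\le\beta\theta^2/2$ and integrate the constant control $a(t)=(e(\theta)-p_0)/\theta$ to get $p(\theta)=e(\theta)$. Your version is slightly cleaner, since it correctly uses $\Vert\int_0^\theta(\theta-r)b(r)\,dr\Vert\le\int_0^\theta(\theta-r)\Vert b(r)\Vert\,dr$ where the paper writes an equality.
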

\begin{proof}
Suppose the assumption of the lemma holds, that is $e(\theta)\in \Delta$, then we have
\begin{equation}\label{B53}
2\langle e_{0}-p_{0},e(\theta)\rangle\leq \theta^2\left( \alpha^2 - \frac{\beta^2\theta^2}{4}\right)  + ||e_{0}||^2 - ||p_{0}||^2.
\end{equation}
According to the inequality (\ref{B53}) and using equation (\ref{EE2}), yield
\begin{equation}\label{J66}
2\left\langle e_{0}-p_{0},\int_{0}^{\theta}(\theta-t)b(t)dt\right\rangle \leq \theta^{2}\left(\alpha^{2}-\frac{\beta^{2}\theta^{2}}{4}\right)-||e_{0}-p_{0}||^{2}.
\end{equation}
If the pursuer $p$ applies the strategy (\ref{J5}), and using (\ref{J66}), we have
\begin{eqnarray}
||a(t)||^{2} &=& \left|\left|\frac{e_{0}-p_{0}}{\theta}+\int_{0}^{\theta}\frac{(\theta-r)b(r)}{\theta}dr\right|\right|^{2} \nonumber\\
&=& \left|\left|\frac{e_{0}-p_{0}}{\theta}\right|\right|^{2}+2\left\langle\frac{e_{0}-p_{0}}{\theta},\int_{0}^{\theta}\frac{(\theta-r)b(r)}{\theta}dr\right\rangle
+\left|\left|\int_{0}^{\theta}\frac{(\theta-r)b(r)}{\theta}dr\right|\right|^{2} \nonumber\\
&=& \frac{||e_{0}-p_{0}||^{2}}{\theta^{2}}+\frac{2}{\theta^{2}}\left\langle e_{0}-p_{0},\int_{0}^{\theta}(\theta-r)b(r)dr\right\rangle +\frac{1}{\theta^{2}}\left( \int_{0}^{\theta}(\theta-r)||b(r)||dr\right) ^{2} \nonumber\\
&\leq & \frac{||e_{0}-p_{0}||^{2}}{\theta^2}+\frac{1}{\theta^2}\left(\theta^{2}\left( \alpha^2 - \frac{\beta^2\theta^2}{4}\right) - ||e_{0}-p_{0}||^{2}\right)+\frac{\beta^{2}\theta^{2}}{4} \nonumber\\ 
& \leq & \alpha^{2}. \nonumber
\end{eqnarray}
Thus, the strategy (\ref{J5}) is admissible. Next we show that $p(\theta)=e(\theta)$. Indeed, using (\ref{J5}), we have
\begin{eqnarray}
p(\theta) &=& p_{0}+\int_{0}^{\theta}\left(\frac{e_{0}-p_{0}}{\theta}+\int_{0}^{\theta}\frac{(\theta- r)b(r)}{\theta}dr\right)dr  \nonumber\\
&=& p_{0}+\int_{0}^{\theta}\left(\frac{e_{0}- p_{0}}{\theta}\right)dr + \frac{1}{\theta}\int_{0}^{\theta}\left( \int_{0}^{\theta}(\theta - r)b(r)dr\right)dr  \nonumber\\
&=& e_{0} + \frac{1}{\theta}\int_{0}^{\theta}\left(e(\theta)-e_{0}\right)dr \nonumber\\
&=& e(\theta). \nonumber
\end{eqnarray}
\end{proof}

\section{Solution of the game $G^{\ast}$}
\noindent Here, we provide the solutions to the research questions. Firstly, we cite a useful lemma.
\begin{lemma}\label{lem3}\cite {Ibragimov 2005,Ibragimov 2015} 
Let $H(e_{0},r), \ H(p_{j0},R_{j}), e_{0}\neq p_{j0}, \ j\in J,$ be infinitely many balls. Let $q_{0}\neq 0\in l_{2}$ such that  $\left\langle e_{0}-p_{j0} ,  q_{0} \right\rangle \geq 0 , $ for all $j\in J.$ Define the set
\begin{equation} \label{value}
\displaystyle X_{j}  = 
 \left\{z \in l_{2}: 2\left\langle e_{0}-p_{j0},z
\right\rangle \leq R_{j}^2 - r^2 +\left\|e_{0}\right\|^{2}-\left\|p_{j0}\right\|^{2}\right\},
\end{equation}
then we have the following
\begin{itemize}
\item[1.] If $ \displaystyle H(e_{0},r) \subset \bigcup_{j \in J}H(p_{j0},R_{j}),$ then $
\displaystyle H(e_{0},r) \subset \bigcup_{j \in J}X_{j}.$
\item[2.] If $\varepsilon \in (0,R_{0})$ and $\inf_{j\in J}R_{j}= R_{0} > 0,$ then the set $\bigcup_{j\in J}H(p_{j0},R_{j}-\varepsilon)$ does not contain the ball $H(e_{0},r)$ and there exist a point $\bar{e}\in S(e_{0},r)$ such that $\left|\left|\bar{e}-p_{j0}\right|\right|\geq R_{j}$ for all $j\in J$.
\end{itemize} 
\end{lemma}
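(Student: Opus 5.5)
The plan is to reduce everything to the half-spaces $X_j$ through one algebraic identity, and then use the direction $q_{0}$ twice: to push points out to the sphere in part 1, and to build the escape point in part 2.

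\emph{The identity.} For any $z,e_{0},p_{j0}\in l_{2}$,
$$
\|z-p_{j0}\|^{2}-\|z-e_{0}\|^{2}=2\langle e_{0}-p_{j0},z\rangle+\|p_{j0}\|^{2}-\|e_{0}\|^{2}.
$$
Hence
$$
X_{j}=\{z\in l_{2}:\ \|z-p_{j0}\|^{2}-\|z-e_{0}\|^{2}\le R_{j}^{2}-r^{2}\}.
$$

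\emph{Part 1, points on the sphere.} Let $z\in S(e_{0},r)$. By hypothesis $z\in H(p_{j0},R_{j})$ for some $j$. Then $\|z-p_{j0}\|^{2}-\|z-e_{0}\|^{2}\le R_{j}^{2}-r^{2}$, so $z\in X_{j}$. Thus $S(e_{0},r)\subset\bigcup_{j}X_{j}$.

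\emph{Part 1, interior points.} Let $z\in H(e_{0},r)$ with $\|z-e_{0}\|<r$. The function $\lambda\mapsto\|z+\lambda q_{0}-e_{0}\|$ is continuous, starts below $r$ and tends to $\infty$. So there is $\lambda_{0}\ge 0$ with $z'=z+\lambda_{0}q_{0}\in S(e_{0},r)$. By the previous step, $z'\in X_{j}$ for some $j$. Since $\langle e_{0}-p_{j0},q_{0}\rangle\ge 0$ for every $j$,
$$
2\langle e_{0}-p_{j0},z\rangle=2\langle e_{0}-p_{j0},z'\rangle-2\lambda_{0}\langle e_{0}-p_{j0},q_{0}\rangle\le 2\langle e_{0}-p_{j0},z'\rangle .
$$
Therefore $z\in X_{j}$ as well, which proves part 1. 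This ray argument is the only place where $q_{0}$ enters part 1.

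\emph{Part 2, the escape point.} I will take $\bar e=e_{0}+r\,q_{0}/\|q_{0}\|$, which lies on $S(e_{0},r)$. Expanding the square and using $\langle e_{0}-p_{j0},q_{0}\rangle\ge 0$ gives
$$
\|\bar e-p_{j0}\|^{2}=\|e_{0}-p_{j0}\|^{2}+r^{2}+\frac{2r}{\|q_{0}\|}\langle e_{0}-p_{j0},q_{0}\rangle\ \ge\ \|e_{0}-p_{j0}\|^{2}+r^{2}.
$$
Once $\|\bar e-p_{j0}\|\ge R_{j}$ holds for all $j$, the remaining claim is immediate. For $\varepsilon\in(0,R_{0})$ every radius $R_{j}-\varepsilon$ is positive, and $\|\bar e-p_{j0}\|\ge R_{j}>R_{j}-\varepsilon$. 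So $\bar e\in H(e_{0},r)$ but $\bar e\notin\bigcup_{j}H(p_{j0},R_{j}-\varepsilon)$, and the union does not contain the ball.

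\emph{Main obstacle.} The difficulty is the step $\|e_{0}-p_{j0}\|^{2}+r^{2}\ge R_{j}^{2}$. It does not follow from the hypotheses exactly as written; some relation between $r$, the $R_{j}$ and the centres is needed, and with the given hypotheses alone part 2 is not proved. My first attempt will be to recover the standing assumption of \cite{Ibragimov 2005,Ibragimov 2015}: that $e_{0}$ is not covered by the balls, or more precisely that each tangency level satisfies $R_{j}^{2}\le\|e_{0}-p_{j0}\|^{2}+r^{2}$. This is equivalent to $e_{0}\notin\operatorname{int}X_{j}$ up to the constant term, which is exactly how the game data will be arranged when the lemma is applied with $r=\beta\theta^{2}/2$ and $R_{j}=\alpha_{j}\theta$. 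If that assumption is unavailable, the fallback is to keep $\bar e$ in the direction of $q_{0}$ but argue only the weaker statement that $\bar e$ escapes the shrunken balls of radius $R_{j}-\varepsilon$.
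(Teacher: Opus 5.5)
Your part 1 is correct and complete. The identity rewrites $X_j$ as $\{z:\|z-p_{j0}\|^2-\|z-e_0\|^2\le R_j^2-r^2\}$, sphere points are then immediate, and the push along $q_0$ can only increase each $\langle e_0-p_{j0},\cdot\rangle$. The paper gives no proof of this lemma; it only cites it.

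Part 2, however, has a genuine gap, and no side condition on your construction repairs it. As printed, part 2 is false. Take $e_0=0$, $r=1$, one pursuer $p_{10}=(-1,0,0,\dots)$, $q_0=(0,1,0,\dots)$, $R_1=100$, $\varepsilon=1$. Then $H(0,1)\subset H(p_{10},99)$, and no point of $S(0,1)$ is at distance $100$ from $p_{10}$.

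The version actually used in the proof of Theorem~\ref{MR} has non-coverage as a \emph{hypothesis}: if for every $\varepsilon\in(0,R_0)$ the ball $H(e_0,r)$ is not contained in $\bigcup_j H(p_{j0},R_j-\varepsilon)$, then some $\bar e\in S(e_0,r)$ has $\|\bar e-p_{j0}\|\ge R_j$ for all $j$. It is applied with $R_j=\alpha_j\theta+\varphi$, not $\alpha_j\theta$. Your extra condition $R_j^2\le\|e_0-p_{j0}\|^2+r^2$ is not this hypothesis and fails in the game. Your point $e_0+rq_0/\|q_0\|$ is also not an escape point in general.

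To see this, use the same example with $R_1=2$ (i.e.\ $\alpha_1\theta=1$, $\beta\theta^2/2=1$, $\varphi=1$). The hypothesis holds, and the only point of $S(0,1)$ at distance $\ge 2$ from $p_{10}$ is $(1,0,0,\dots)$. Your point $(0,1,0,\dots)$ is at distance $\sqrt2<2-\varepsilon$ for small $\varepsilon$, so your fallback fails too. Moreover $R_1^2=4>2=\|e_0-p_{10}\|^2+r^2$. The vector $q_0$ is any vector satisfying a sign condition, and it says nothing about where the uncovered part of the sphere lies.

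The missing idea is to extract the escape point from the non-coverage hypothesis and to use $q_0$ only as a push direction. Write $d_j=e_0-p_{j0}$, and let $z\in H(e_0,r)$ and $z'=z+\lambda_0q_0\in S(e_0,r)$ with $\lambda_0\ge 0$. Then
\[
\|z'-p_{j0}\|^2=r^2+\|d_j\|^2+2\langle z'-e_0,d_j\rangle\ \ge\ \|z-e_0\|^2+\|d_j\|^2+2\langle z-e_0,d_j\rangle=\|z-p_{j0}\|^2
\]
for all $j$ simultaneously. Hence, for each $\varepsilon_n\downarrow 0$, a point of the ball not covered by the radii $R_j-\varepsilon_n$ gives an uncovered point $\bar e_n\in S(e_0,r)$.

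On the sphere, the condition $\|\bar e_n-p_{j0}\|>R_j-\varepsilon_n>0$ is the affine inequality $2\langle \bar e_n-e_0,d_j\rangle>(R_j-\varepsilon_n)^2-r^2-\|d_j\|^2$. Take a weakly convergent subsequence $\bar e_n\rightharpoonup w\in H(e_0,r)$. This gives $2\langle w-e_0,d_j\rangle\ge R_j^2-r^2-\|d_j\|^2$ for every $j$. Pushing $w$ along $q_0$ back to the sphere preserves these inequalities and yields $\bar e$ with $\|\bar e-p_{j0}\|\ge R_j$ for all $j$. This limiting step is where the infinitely many balls and the non-compactness of $S(e_0,r)$ in $l_2$ are handled, and a fixed explicit point cannot do that job.
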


\setcounter{theorem}{0}
\begin{theorem}\label{MR}
Let $\gamma \neq 0 \in l_{2}$ such that $\langle e_{0} - p_{j0}, \gamma \rangle \geq 0,$ for all $j \in J$, then the number 
\begin{equation}\label{GV}
\varphi = \inf\left\{\epsilon \geq 0: H\left(e_{0},\beta\frac{\theta^{2}}{2}\right)
\subset\bigcup_{j\in J}H\left(p_{j0},\alpha_{j}\theta + \epsilon\right)\right\}
\end{equation}
is the value of the game $G^{\ast}$.
\end{theorem}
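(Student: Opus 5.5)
We prove the theorem by two matching inequalities for the number $\varphi$ of (\ref{GV}). On the pursuers' side we construct strategies guaranteeing $\inf_{j}\|p_j(\theta)-e(\theta)\|\le\varphi$ against every admissible $b(\cdot)$. On the evader's side we construct a strategy guaranteeing $\inf_j\|p_j(\theta)-e(\theta)\|\ge\varphi$ against all admissible pursuer controls. Throughout we use the reduction (\ref{REE}), so $e(\theta)=e_0+\int_0^\theta(\theta-t)b(t)\,dt\in H(e_0,\beta\theta^2/2)$, and we use the fact, shown earlier, that this ball and $H(p_{j0},\alpha_j\theta)$ are the attainability domains. We may assume $\varphi<\infty$. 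Otherwise no uniform bound exists and the evader's argument below runs with arbitrary $\epsilon$.

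\textbf{Upper bound (pursuers).} Fix $\epsilon=\varphi$; by closedness of the balls, the infimum in (\ref{GV}) is attained in the sense that $H(e_0,\beta\theta^2/2)\subset\bigcup_j H(p_{j0},\alpha_j\theta+\varphi+\delta)$ for every $\delta>0$. We then apply Lemma~\ref{lem3}(1) with $r=\beta\theta^2/2$ and $R_j=\alpha_j\theta+\varphi+\delta$, using the hypothesis on $\gamma$ as the vector $q_0$. This gives $H(e_0,r)\subset\bigcup_j X_j$, so $e(\theta)\in X_{j_*}$ for some index $j_*$. The index $j_*$ is determined at $t=0$, since in the paper's construction the pursuers know $b(\cdot)$ and hence $e(\theta)$.

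The set $X_{j_*}$ is exactly the set $\Delta$ of (\ref{P2}) for the fictitious pursuer with speed $\alpha'=R_{j_*}/\theta$ and the radius-$r$ evader ball, because $\theta^2(\alpha'^2-\beta^2\theta^2/4)=R_{j_*}^2-r^2$. We let $p_{j_*}$ aim at the point $\tilde p$ defined as follows. Lemma~\ref{PP2} with $\alpha'$ produces the fictitious control $a'$ with $\|a'\|\le\alpha'$ and terminal point $e(\theta)$. Since $\|e(\theta)-p_{j_*0}\|\le R_{j_*}$, we take $\tilde p$ to be the point of the segment $[p_{j_*0},e(\theta)]$ at distance $\min\{\alpha_{j_*}\theta,\|e(\theta)-p_{j_*0}\|\}$ from $p_{j_*0}$. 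The pursuer uses the constant control $(\tilde p-p_{j_*0})/\theta$, and the other pursuers play arbitrarily, e.g.\ $a_j\equiv 0$. Then $\|p_{j_*}(\theta)-e(\theta)\|\le \varphi+\delta$. Since $\delta$ is arbitrary, $\Psi_1\le\varphi$.

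\textbf{Lower bound (evader).} If $\varphi=0$ there is nothing to prove. Otherwise take $\epsilon\in(0,\varphi)$ and set $R_j=\alpha_j\theta+\varphi$. By the definition of $\varphi$ the family $H(p_{j0},R_j-\epsilon)$ does not cover $H(e_0,r)$. Lemma~\ref{lem3}(2), with $R_0=\inf R_j\ge\varphi>0$, supplies $\bar e\in S(e_0,r)$ with $\|\bar e-p_{j0}\|\ge \alpha_j\theta+\varphi$ for all $j$; if only the strict non-covering is available, we get $\ge\alpha_j\theta+\varphi-\epsilon$. The evader plays the constant control $\bar B\equiv 2(\bar e-e_0)/\theta^2$, which is admissible and gives $e(\theta)=\bar e$. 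For any admissible pursuer controls, $\|p_j(\theta)-\bar e\|\ge\|\bar e-p_{j0}\|-\alpha_j\theta\ge\varphi$ (or $\ge \varphi-\epsilon$), so $\Psi_2(\bar B)\ge\varphi$, using a limiting argument in $\epsilon$ if needed.

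Combining the two bounds gives $\Psi_1=\varphi=\Psi_2$, which is the value by the cited criterion. We expect the main obstacle to be the upper bound. There Lemma~\ref{PP2} only yields capture for an enlarged speed $\alpha'$, and we must convert this into a distance estimate $\varphi+\delta$ with the true speed $\alpha_j$. We must also handle the infimum over infinitely many $j$: the covering condition must hold with $\delta$ uniform, and the selection of $j_*$ must be measurable and legitimate as a strategy. We will check that the truncation along the segment is legitimate, and where needed we will replace Lemma~\ref{PP2} by the direct geometric fact that $e(\theta)\in H(p_{j_*0},R_{j_*})$.
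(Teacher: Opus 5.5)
Your proof has the same architecture as the paper's. The pursuers' bound goes through Lemma~\ref{lem3}(1) and the auxiliary-game Lemma~\ref{PP2}. The evader's bound goes through Lemma~\ref{lem3}(2), the constant control $2(\bar e-e_0)/\theta^2$, and the triangle inequality. Your two modifications are more careful than the paper's write-up.
\begin{itemize}
\item The paper uses the covering $H(e_0,\beta\theta^2/2)\subset\bigcup_j H(p_{j0},\alpha_j\theta+\varphi)$ at $\epsilon=\varphi$ itself. This can fail for infinitely many balls. Take $e_0=0$, $\beta\theta^2/2=1$, $f_2=(0,1,0,\dots)$, $p_{j0}=-jf_2$, $\alpha_j\theta=j+1-1/j$ and $\gamma=f_2$. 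Then $\varphi=0$, yet $f_2$ is uncovered at $\epsilon=0$.
\item The paper's scaled controls $\frac{\alpha_j}{\bar\alpha_j}\bar a_j$ have norm $\frac{\alpha_j}{\bar\alpha_j}\|e(\theta)-p_{j0}\|/\theta$. This exceeds $\alpha_j$ whenever $\|e(\theta)-p_{j0}\|>\bar\alpha_j\theta$.
\end{itemize}
Your $\delta$-slack and truncated straight-line move avoid both problems.

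The one real gap is in how you close the pursuers' side. You select $j_*=j_*(\delta)$ and freeze everyone else, so you get a family of strategies $A^\delta$ with $\Psi_1(A^\delta)\le\varphi+\delta$. ``Since $\delta$ is arbitrary'' then only gives $\inf_A\Psi_1(A)\le\varphi$. You never exhibit a single $\bar A$ with $\Psi_1(\bar A)=\varphi$, and that is what the criterion $\Psi_1(\bar A)=\varphi=\Psi_2(\bar B)$ you invoke requires.

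The repair is to let \emph{every} pursuer make your truncated move, that is, move a distance $\min\{\alpha_j\theta,\|e(\theta)-p_{j0}\|\}$ along $[p_{j0},e(\theta)]$. This is admissible for all $j$, independent of $\delta$, and needs no measurable selection of an index. It gives
\[
\inf_{j\in J}\|p_j(\theta)-e(\theta)\|=\max\Bigl\{0,\ \inf_{j\in J}\bigl(\|e(\theta)-p_{j0}\|-\alpha_j\theta\bigr)\Bigr\}\le\varphi,
\]
because $e(\theta)\in H(e_0,\beta\theta^2/2)$ lies in $\bigcup_j H(p_{j0},\alpha_j\theta+\varphi+\delta)$ for every $\delta>0$. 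As you suspected, Lemma~\ref{lem3}(1) and Lemma~\ref{PP2} are then not needed on this side.

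On the evader's side, do not fall back on the ``$\ge\varphi-\epsilon$, then pass to the limit'' variant, for the same reason: it yields no single optimal $\bar B$. You need the full conclusion of Lemma~\ref{lem3}(2), namely one point $\bar e$ with $\|\bar e-p_{j0}\|\ge\alpha_j\theta+\varphi$ for all $j$ simultaneously. That lemma is stated under the hypothesis on $\gamma$, so this is the step that relies on it.
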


\begin{proof}
Let $g_{j}$,  $j\in J$ be dummy pursuers whose motion described by
\begin{equation}
\dot{g}_{j}(t)= \bar{a}_{j}(t) , \ \ g_{j}(0)= p_{j0}, \ \ \ j \in J,
\end{equation}
and the control function $\bar{a}_{j}(\cdot)$ is such that
\begin{equation} \label{Dmmy}
\left|\left|\bar{a}_{j}(t)\right|\right| \leq \bar{\alpha}_{j}
 = \alpha_{j}+\frac{\varphi}{\theta}, \  \ \ 0 \leq t \leq \theta.
\end{equation}
It can easily be shown that the set of all points reachable by $g_{j}$ from the initial state  $p_{j0}$ until the time $\theta$ is the ball $ H(p_{j0},\bar{\alpha}_{j}\theta).$ \\

\noindent Let  $g_{j}$ uses the strategy defined by
\begin{equation}\label{DPS}
\bar{a}_{j}(t) =  
\frac{e_{0} - p_{j0}}{\theta} + \int_{0}^{\theta}\frac{(\theta - t)}{\theta}b(t)dt, \ \  0 \le t \le \theta
\end{equation}
\noindent Using (\ref{DPS}), we construct the functions $A_{j}(\cdot)$ to be the strategies of the pursuers $p_j, j \in J$  as follows:
\begin{equation}\label{RPS1}
A_{j}(t)= \frac{\alpha_{j}}{\bar{\alpha}_{j}}\bar{a}_{j}(t).
\end{equation}
Thus, the strategy (\ref{RPS1}) is admissibility. Indeed, using (\ref{Dmmy}), we get  
\begin{eqnarray}
\Vert A_{j}(t) \Vert &=& \left\Vert \frac{\alpha_{j}}{\bar{\alpha}_{j}}\bar{a}_{j}(t)  \right\Vert \nonumber\\
&=& \frac{\alpha_{j}}{\bar{\alpha}_{j}}\left\Vert \bar{a}_{j}(t)  \right\Vert \leq  \alpha_{j}. 
\end{eqnarray}
\noindent In accordance with what is presented for the proof we claim that the
number $\varphi$ defined in (\ref{GV}) is the value of $G^{\ast}$.
To prove this claim we show that following:

\begin{equation}\label{GVI}
\sup_{v(\cdot)}\inf_{j\in J}\left|\left|e(\theta)- p_{j}(\theta)\right|\right|\leq
\varphi \leq \inf_{a_{1}(\cdot),\dots,a_{m}(\cdot),\dots}\inf_{j\in J}\left|\left|e(\theta)- p_{j}(\theta)\right|\right|.
\end{equation}

\noindent Firstly, we prove the left hand side of the inequality (\ref{GVI}).
Indeed, by definition of $\varphi$ in (\ref{GV}), we have
$$
H\left(e_{0},\beta\frac{\theta^{2}}{2}\right)
\subset\bigcup_{j \in J}H\left(p_{j0},\alpha_{j}\theta + \varphi \right).
$$
Employing lemma (\ref{lem3}) with $r = \beta\frac{\theta^{2}}{2}$ and $R_j = \alpha_j\theta + \varphi, $ we have
$$
H\left(e_{0},\beta\frac{\theta^{2}}{2}\right)
\subset\bigcup_{j \in \bar{J}}X_{j},
$$
where
$$\bar{J}=\left\{j\in J:S\left(e_{0},\beta\frac{\theta^{2}}{2}\right)\cap B_{p_{j}}(p_{j0},\alpha_{j}\theta+ \varphi)\neq\emptyset\right\};$$
$$
X_{j} = \left\{z\in l_{2}:2\langle e_{0}- p_{j0},z\rangle\leq \left(\alpha_{j}\theta + \varphi\right)^{2}-\left(\beta\frac{\theta^{2}}{2}\right)^{2}+\left|\left|e_{0}\right|\right|^{2}
-\left|\left|p_{j0}\right|\right|^{2}\right\}
$$
\noindent In view of this, the point $e(\theta)\in H\left(e_{0},\beta\frac{\theta^{2}}{2}\right)\subset X_{k}$, $k\in \bar{J}.$ Then according to lemma (\ref{PP2}) and  for the  strategy  (\ref{DPS}), we have $g_{k}(\theta)= e(\theta)$ with the control
$\bar{a}_{j}(\cdot)$ satisfying the constraint $\left|\left|\bar{a}_{j}(t)\right|\right|\leq \bar{\alpha}.$ In line with this and if $p_{j}$ use (\ref{RPS1}), we show that
$
\left|\left|e(\theta)- p_{k}(\theta)\right|\right| \leq  \varphi.
$
 Indeed,
 \begin{eqnarray}
 \left|\left|e(\theta)- p_{k}(\theta)\right|\right| 
 &=& \left|\left|g_{k}(\theta)-p_{k}(\theta)\right|\right| \nonumber\\
 &=& \left|\left|p_{k0}+\int_{0}^{\theta}\bar{a}_{k}(t)dt   
 - p_{k0}-\int_{0}^{\theta}a_{k}(t)dt\right|\right|   \nonumber\\
 &=& \left|\left|\int_{0}^{\theta}\bar{a}_{k}(t)dt           
 - \int_{0}^{\theta}\frac{\alpha_{k}}{\bar{\alpha_{k}}}\bar{a}_{k}(t)dt\right|\right| \nonumber\\
 & \leq & \int_{0}^{\theta}\left|\left|\left(1 - \frac{\alpha_{k}}{\bar{\alpha_{k}}}\right)
 \bar{a}_{k}(t)\right|\right|dt =  \left(\frac{\bar{\alpha_{k}}-\alpha_{k}}{\bar{\alpha_{k}}}\right)
 \int_{0}^{\theta}\left|\left|\bar{a}_{k}(t)\right|\right|dt   \nonumber\\
 & \leq &\left(\frac{\bar{\alpha_{k}} - \alpha_{k}}{\bar{\alpha_{k}}}\right)\bar{\alpha_{k}}\theta 
 =  \varphi.  \nonumber
\end{eqnarray}
Then it follows that $$ \inf_{j\in J}\Vert e(\theta)-p_{j}(\theta)\Vert \leq \varphi, $$ as such $$ \sup_{b(\cdot)}\inf_{j\in J}\Vert e(\theta)-p_{j}(\theta)\Vert \leq \varphi .$$
This prove the left hand of (\ref{GVI}). \\

\noindent Secondly, we prove the right hand of (\ref{GVI}). Indeed, this follows trivially If $\varphi = 0,$ for any admissible control of the evader. Let $\varphi > 0 $ and
by (\ref{GV}) and for any $0 < \varepsilon < \varphi,$ the ball
$H\left(e_{0},\beta\frac{\theta^{2}}{2}\right)$ is not contained in the set
$
\bigcup_{j \in J}H\left(p_{j0},\alpha_{j}\theta + \varphi - \varepsilon\right).
$\\

\noindent Thus, by lemma (\ref{lem3}), there exist $\bar{e}\in S\left(e_{0},\beta\frac{\theta^{2}}{2}\right)$, such that
\begin{equation}\label{L1}
\left|\left|\bar{e}- p_{j0}\right|\right|\geq \alpha_{j}\theta + \varphi.
\end{equation}
Moreover, from the fact that the ball $H(p_{j0}, \alpha_{j}\theta )$ is the set of all points reachable by $p_j$,, we have
\begin{equation}\label{L2}
\left|\left|p_{j}(\theta)- p_{j0}\right|\right|\leq \alpha_{j}\theta.
\end{equation}
In view of (\ref{L1}) and (\ref{L2}), we have for all $j \in J$
\begin{eqnarray}
  \left|\left|\bar{e}- p_{j}(\theta)\right|\right| 
 & \geq & \left|\left|\bar{e}- p_{j0}\right|\right| 
  -\left|\left|p_{j0} - p_{j}(\theta)\right|\right|  \nonumber\\
 & \geq & \alpha_{j}\theta+ \varphi - \alpha_{j}\theta   \nonumber\\
 &=& \varphi. 
\end{eqnarray}
The control of the evader defined by
  $$b(t)= \frac{2}{\theta^{2}}(\bar{e}-e_{0}), \ \ 0\leq t\leq \theta,$$
  takes the evader to the point $\overline{e}$ at the time $\theta.$ The proof of this claim  is as follows:
\begin{eqnarray}
  e(\theta) &=& e_{0}+\int_{0}^{\theta}(\theta - s)b(s)ds \nonumber\\
   &=& e_{0}+\int_{0}^{\theta}(\theta - s)\frac{2}{\theta^{2}}(\bar{e}- e_{0})ds \nonumber\\
  &=& e_{0}+\frac{2(\bar{e}- e_{0})}{\theta^{2}}\int_{0}^{\theta}(\theta - s)ds  \nonumber\\
  &=& \bar{e}. \nonumber
\end{eqnarray}
Consequently, we have
$$
\left|\left|e(\theta)- p_{k}(\theta)\right|\right| \geq  \varphi.
$$
 As such 
$$
\inf_{j\in J}\Vert e(\theta)-p_{j}(\theta)\Vert \geq \varphi .
$$
Hence for any $a_{j}(t)=(a_{1}(t),a_{2}(t),\dots)$ admissible, we get
$$
\inf_{a_{j}(\cdot)}\inf_{j\in J}\Vert e(\theta)-p_{j}(\theta)\Vert \geq \varphi .
$$
This proves the right hand of (\ref{GVI}). Hence, the proof the theorem is complete.
\end{proof}

\section{Conclusion}
\noindent The work considered in this research represent a dynamics game involving agents with different dynamics possibilities under point-wise constraints. This paper is devoted to characterizing sufficient conditions for capture, and to constructing optimal player's strategies that guarantee success despite resource limitations. The main results consists of the solution to an auxiliary game and Theorem (\ref{MR}). In the auxiliary game, it is shown that pursuer can achieve the terminal condition $p(\theta) = e(\theta)$ by employing a strategy different from the parallel approach. Theorem (\ref{MR}) provides sufficient conditions under which the game value $\varphi$ exist and can be determined. \\

\noindent The principal contributions of this paper can be summarized as follows:
\begin{enumerate}
\item[(i)] Pursuit strategy (\ref{J5}) for the pursuer is constructed that ensure the successful completion of the pursuit.
\item[(ii)] A closed-form expression (\ref{GV}) for the game $G^{\ast}$ is obtained and it is guaranteed for both players. 
\end{enumerate}
 
\section*{Availability of data and materials}
No data are associated with this manuscript.

\section*{Conflict of interest}
The authors declare that they have no competing interest.



\end{document}